\documentclass[12pt,a4paper,twoside]{article}
\usepackage{graphics}
\usepackage{amssymb}
\usepackage{amsmath}
\usepackage{mathrsfs}
\usepackage{makeidx}
\usepackage{color}
\usepackage{graphicx}
\usepackage{subfigure}
\usepackage{multicol}
\usepackage{multirow}
\usepackage{float}
\usepackage{booktabs}
\usepackage{epsfig}
\usepackage{multirow}
\usepackage{epstopdf}
\usepackage[colorlinks,
linkcolor=red,
anchorcolor=blue,
citecolor=blue
]{hyperref} 
\usepackage{caption}
\usepackage{cases}
\usepackage{algorithm,algorithmic}

\usepackage{latexsym, cite, bm, amsthm, amsmath}
\usepackage{enumerate}
\usepackage{marginnote}
\usepackage{epstopdf}
\usepackage{pifont}
\usepackage{cleveref}

\def \[{\begin{equation}}
	\def \]{\end{equation}}

\hoffset=0truemm    \voffset=0truemm  \topmargin=0truemm
\oddsidemargin=0truemm    \evensidemargin=0truemm
\textheight=226truemm       \textwidth=158truemm
\newtheorem{thm}{Theorem}[section]

\newtheorem{lem}{Lemma}[section]

\newtheorem{rem}{Remark}[section]

\newtheorem{exam}{Example}[section]

\numberwithin{equation}{section}

\title{\bf A generalization of the Gauss-Seidel iteration method for generalized absolute value equations}

\usepackage[marginal]{footmisc}
\usepackage{authblk}
\author[a]{Tingting Luo\thanks{Email address: 610592494@qq.com.}}
\author[a]{Jiayu Liu\thanks{Email address: 1977078576@qq.com.}}
\author[a]{Cairong Chen\thanks{Supported in part by the Fujian Alliance of Mathematics (2023SXLMQN03). Email address: cairongchen@fjnu.edu.cn.}}
\author[a]{Linjie Chen\thanks{Supported in part by the Fujian Social Science Foundation (No. FJ2024BF057), the Fujian Provincial Education Science Research Foundation (No. FJJKBK21-010) and the Teaching Reform Research project of Fujian Normal University (No. I202301062). Email address: clj@fjnu.edu.cn. }}
\author[b]{Changfeng Ma\thanks{Corresponding author. Email address:  macf@fjnu.edu.cn.}}
\affil[a]{School of Mathematics and Statistics \& Key Laboratory of Analytical Mathematics and Applications (Ministry of Education) \& Fujian Provincial Key Laboratory of Statistics and Artificial Intelligence, Fujian Normal University, Fuzhou, 350117, P.R. China}
\affil[b]{School of Artificial Intelligence, Xiamen Institute of Technology, Xiamen, 361021, P.R.  China}

\begin{document}
\date{\today}
\maketitle

\begin{abstract}
A parameter-free method, namely the generalization of the Gauss-Seidel (GGS) method, is developed to solve generalized absolute value equations. Convergence of the proposed method is analyzed. Numerical results are given to demonstrate the effectiveness and efficiency of the GGS method. Some results in the recent work of Edalatpour et al. \cite{edhs2017} are extended.

\noindent {\bf Keywords:} Generalized absolute value equations; Generalized Gauss-Seidel iteration method; Parameter-free.
\end{abstract}

\section{Introduction}
Consider the system of generalized absolute value equations (GAVEs) of the form
\begin{equation}\label{eq:gave}
Ax - B\vert x \vert = b,
\end{equation}
where~$A,B \in \mathbb{R}^{n\times n}$ and $b\in\mathbb{R}^n$ are given, and $x\in\mathbb{R}^n$ is unknown. Here, the notation $|x|$ stands for the componentwise absolute value of $x$. To the best of our knowledge,  GAVEs~\eqref{eq:gave} were first formally studied by by Rohn in 2004 \cite{rohn2004}. GAVEs~\eqref{eq:gave} has become increasingly important in research because they are equivalent to the classic linear complementarity problem (LCP) \cite{cops2009,mame2006}. As demonstrated in \cite{mang2007}, solving the general GAVEs~\eqref{eq:gave} is NP-hard. Furthermore, when a solution exists, checking if it has a unique solution or more than one is an NP-complete problem \cite{pork2009}. Notwithstanding, conditions under which GAVEs~\eqref{eq:gave} has a unique solution for any $b\in \mathbb{R}^n$ are constructed in \cite{rohn2009,wuli2020,rohf2014,wush2021,pork2009,mezz2020,love2013,acha2018,kdhm2024}.

When GAVEs~\eqref{eq:gave} has a solution, researchers have worked on developing efficient algorithms to approximate these solutions.
Clearly, if matrix $B$ is nonsingular, we can transform GAVEs \eqref{eq:gave} into the system of absolute value equations (AVEs)
\begin{equation}\label{eq:ave}
Ax-|x|=b.
\end{equation}
In this sense, though it may be lack of efficiency due to the inversion of $B$, almost all numerical algorithms for solving AVEs~\eqref{eq:ave} can be theoretically adjusted to solve GAVEs~\eqref{eq:gave}.
However, when $B$ is singular, existing numerical methods for AVEs~\eqref{eq:ave} may not work for GAVEs~\eqref{eq:gave}. This motivates us to develop new efficient methods for solving the general GAVEs~\eqref{eq:gave}, in which $B$ is possible singular.

Over the past two decades, numerous numerical algorithms have been establishing to solve GAVEs~\eqref{eq:gave}. One approach is the generalized Newton (GN) method \cite{huhz2011}, which inherits the spirit of Mangasarian in \cite{mang2009}.
Subsequently, weaker convergent results of GN for GAVEs~\eqref{eq:gave} are investigated in \cite{lilw2018}. A problem with the GN method is that the Jacobian matrix may change during iterations, causing computational difficulties, especially for large-scale or ill-conditioned problems.
To remedy this issue, Wang et al. \cite{wacc2019} proposed a modified Newton-type (MN) iteration method by separating the differentiable and non-differentiable parts of GAVEs \eqref{eq:gave}. This method is described in Algorithm~\ref{alg:mn}.

\begin{algorithm}
\caption{The MN iteration method \cite{wacc2019}}\label{alg:mn}
Let $\Omega\in\mathbb{R}^{n\times n}$ be a  semi-definite matrix such that $A+\Omega$ is nonsingular. Given an initial vector $x^{(0)}\in\mathbb{R}^n$, for $k=0,1,2,\ldots$ until the iteration sequence $\{x^{(k)}\}^\infty_{k=0}$ is convergent, compute
\begin{equation*}
x^{(k+1)}=(A+\Omega)^{-1}(\Omega x^{(k)}+B|x^{(k)}|+b).
\end{equation*}
\end{algorithm}

Notably, when $\Omega=0$, the MN iteration reduces to the Picard iteration \cite{rohf2014}. Using matrix splitting techniques, researchers developed the Newton-based matrix splitting (NMS) method for GAVEs~\eqref{eq:gave} \cite{zhwl2021}, which extends the MN method.
Recently, the NMS iterative method is further extended in \cite{soso2023,zcss2024,lich2025}. For more numerical algorithms for solving GAVEs~\eqref{eq:gave}, one can refer to \cite{lild2023,tazh2019,jizh2013,xiqh2024,dawz2024,cyhm2025} and the references therein.

We should emphasis that, many of the above-mentioned numerical methods for solving GAVEs \eqref{eq:gave} rely on specific parameters (or parameter matrices). The performance of these algorithms are highly sensitive to the choice of the parameters, and the optimal parameters are often not easy to be determined.
The main goal of this paper is to develop a parameter-free iterative method for solving GAVEs~\eqref{eq:gave}. To this end, we recall the generalization of the Gauss-Seidel (GGS) iterative method for solving AVEs~\eqref{eq:ave} \cite{edhs2017}. We will extend the GGS iterative method to solve GAVEs~\eqref{eq:gave} and analyze its convergence. Numerical results are given to demonstrate the efficiency of our method.

The rest of this paper is organized as follows. Some necessary terminology and useful lemmas are given in \Cref{sec:pre}. In \Cref{sec:ggs}, the GGS iterative method for GAVEs~\eqref{eq:gave} is proposed and its convergence is analyzed. The numerical experiments and the conclusion of this paper are given in \Cref{sec:Num} and \Cref{sec:conclusion}, respectively.

\section{Preliminaries}\label{sec:pre}
In this section, we briefly introduce some notations, definitions and lemmas, which are necessary for the later development.

For notation, we follows the following convention:

\begin{itemize}
  \item $\mathbb{R}^{m\times n}$ is the set of $m\times n$ real matrices, $\mathbb{R}^m = \mathbb{R}^{m\times 1}$, and $\mathbb{R} = \mathbb{R}^1$;

  \item For $A=(a_{ij})\in \mathbb{R}^{m\times n}$,  the absolute value of the matrix $A$ is denoted by $|A|=(|a_{ij}|)$; The infinity norm of the matrix $A$ is defined as
$$
\|A\|_\infty=\max\limits_{1\leq i\leq m}\sum\limits^{n}_{j=1}|a_{ij}|;
$$

  \item For the given matrices $A=(a_{ij})\in\mathbb{R}^{m\times n}$ and $B=(b_{ij})\in\mathbb{R}^{m\times n}$, $A\geq B(A>B)$ if $a_{ij}\geq b_{ij}(a_{ij}>b_{ij})$ holds for all $1\leq i\leq m$ and $1\leq j\leq n$;

  \item For $U\in \mathbb{R}^{n\times n}$, $\rho(U)$ denotes the spectral radius of $U$.
\end{itemize}

For $A=(a_{ij})\in\mathbb{R}^{n\times n}$, the comparison matrix of $A$ is defined as $\langle A\rangle=(\langle a_{ij}\rangle)$, where
\begin{equation*}
\langle a_{ij}\rangle=
\begin{cases}
  ~~|a_{ij}|,  &\mbox{for } i=j, \\
  -|a_{ij}|,  &\mbox{for }  i\neq j.
\end{cases}
\end{equation*}
The matrix $A$ is called a $Z$-matrix if $a_{ij}\leq 0$ holds for all $i\neq j$,  an $M$-matrix if $A$ is a $Z$-matrix and $A^{-1}\geq 0$, and an $H$-matrix if $\langle A\rangle$ is an $M$-matrix. In addition, the matrix $A$ is called an $H_+$-matrix if $A$ is an $H$-matrix and $a_{ij}>0$ holds for all $i=j$. For a given matrix $A\in\mathbb{R}^{n\times n}$, $A=M-N$ is called an $M$-splitting of the matrix $A$ if $M$ is an $M$-matrix and $N\geq 0$ \cite{schn1984}.

In the following, we introduce some useful lemmas.

\begin{lem}[e.g., \cite{zhyi2013}]\label{lem:rho}
Let $A\in \mathbb{R}^{n\times n}$ be an $M$-matrix. If $A=M-N$ is an $M$-splitting of $A$, then $\rho(M^{-1}N)<1$.
\end{lem}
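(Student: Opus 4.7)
The plan is to combine the Perron-Frobenius theorem applied to the nonnegative matrix $M^{-1}N$ with the nonnegativity of $A^{-1}$, so as to rule out $\rho(M^{-1}N) \ge 1$ by contradiction.

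First I would establish the nonnegativity structure underlying the splitting. Since $M$ is an $M$-matrix by the definition of an $M$-splitting, we have $M^{-1} \ge 0$, and combining this with $N \ge 0$ gives $T := M^{-1}N \ge 0$. By the Perron-Frobenius theorem for nonnegative matrices, there exists a nonzero vector $v \ge 0$ with $Tv = \rho(T) v$, equivalently $Nv = \rho(T) Mv$. Using $A = M - N$, this rearranges to the key identity
\[
Av = (1 - \rho(T)) Mv.
\]

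Next I would rule out $\rho(T) \ge 1$. If $\rho(T) = 0$ there is nothing to prove, so assume $\rho(T) > 0$; then $Mv = Nv/\rho(T) \ge 0$, because $N \ge 0$ and $v \ge 0$. If $\rho(T) = 1$, the identity forces $Av = 0$, and since $A$ is nonsingular (being an $M$-matrix) this gives $v = 0$, contradicting $v \ne 0$. If $\rho(T) > 1$, multiplying the identity by $A^{-1} \ge 0$ (using again that $A$ is an $M$-matrix) yields $v = (1 - \rho(T)) A^{-1} M v \le 0$, which combined with $v \ge 0$ forces $v = 0$, another contradiction. Hence $\rho(T) < 1$, as claimed.

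The only subtlety is guaranteeing that the Perron eigenvector $v$ is componentwise nonnegative (immediate from Perron-Frobenius since $T \ge 0$) and then that $Mv \ge 0$, which is what lets the sign comparison close the case $\rho(T) > 1$. I do not anticipate any serious obstacle, because the statement is essentially the classical fact that a regular splitting of a monotone matrix is convergent (an $M$-splitting of an $M$-matrix being a special case), and the argument above is the standard Perron-Frobenius route found in Varga's monograph.
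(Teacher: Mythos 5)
Your argument is correct. Note that the paper itself offers no proof of this lemma --- it is stated with a citation to the literature (Zheng and Yin, and ultimately the classical regular-splitting theory in Varga) --- so there is no internal proof to compare against. Your route is the standard one: nonnegativity of $T=M^{-1}N$ from $M^{-1}\ge 0$ and $N\ge 0$, a nonnegative Perron eigenvector $v$ with $Tv=\rho(T)v$, the identity $Av=(1-\rho(T))Mv$, and then elimination of $\rho(T)\ge 1$ using the nonsingularity and inverse-nonnegativity of the $M$-matrix $A$. The only point you need to invoke with care is the existence of a nonnegative eigenvector associated with $\rho(T)$ for a general (possibly reducible) nonnegative matrix; this is the weak form of the Perron--Frobenius theorem and holds by a limiting argument from the irreducible case, so your proof is complete as written.
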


\begin{lem}[{\cite[Page 46]{varg1962}}]\label{lem:2.1}
Let $x$ and $y$ be two vectors in $\mathbb{R}^n$, then $\||x|-|y|\|_\infty \leq \|x-y\|_\infty$.
\end{lem}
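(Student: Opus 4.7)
The plan is to reduce the vector inequality to the scalar reverse triangle inequality and then pass to the maximum over coordinates. First, for any fixed index $i\in\{1,\dots,n\}$, I would invoke the real-line identity $\bigl||x_i|-|y_i|\bigr|\leq |x_i-y_i|$, which is the usual \emph{reverse triangle inequality}; it follows at once from applying $|a|\leq |a-b|+|b|$ with $a=x_i$, $b=y_i$, and then again with the roles of $a$ and $b$ interchanged.

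Second, I would unpack the two infinity norms that appear in the claim in light of the convention introduced earlier in this section, namely that $|x|=(|x_i|)$ denotes the componentwise absolute value. Under this convention the vector $|x|-|y|$ has $i$-th entry $|x_i|-|y_i|$, so $\bigl\||x|-|y|\bigr\|_\infty=\max_{1\leq i\leq n}\bigl||x_i|-|y_i|\bigr|$, while $\|x-y\|_\infty=\max_{1\leq i\leq n}|x_i-y_i|$. Taking the maximum over $i$ on both sides of the scalar inequality from the first step then yields $\bigl\||x|-|y|\bigr\|_\infty\leq \|x-y\|_\infty$ in a single line.

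I do not anticipate any genuine obstacle, since the lemma is a classical one-line consequence of the scalar reverse triangle inequality and is invoked here only as a technical tool for handling the nonsmooth absolute value term in later convergence estimates for the GGS iteration. The only subtlety worth flagging is notational: the left-hand side is \textbf{not} a difference of the scalar norms $\|x\|_\infty$ and $\|y\|_\infty$, but the infinity norm of the coordinatewise vector $|x|-|y|$, and the componentwise argument above relies on exactly this interpretation.
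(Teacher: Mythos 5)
Your proof is correct: the componentwise reverse triangle inequality $\bigl||x_i|-|y_i|\bigr|\leq|x_i-y_i|$ followed by taking the maximum over $i$ is exactly the standard argument, and the paper itself gives no proof (it simply cites Varga), so there is nothing for your argument to diverge from.
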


\begin{lem}[e.g., {\cite[Theorem~1.10]{saad2003}}]\label{lem:2.0}
For~$U\in\mathbb{R}^{n\times n}$,~$\lim\limits_{k\rightarrow+\infty} U^k=0$ if and only if~$\rho(U)<1$.
\end{lem}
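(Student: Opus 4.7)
The plan is to prove the two implications separately, as this is the classical characterization of a convergent matrix. For the necessity direction, I would assume $U^k\to 0$ and let $\lambda\in\mathbb{C}$ be any (possibly complex) eigenvalue of $U$ with an associated eigenvector $v\neq 0$. From $U^kv=\lambda^kv$ and $U^kv\to 0$, I would deduce $|\lambda|^k\|v\|\to 0$, forcing $|\lambda|<1$. Taking the maximum over all eigenvalues then yields $\rho(U)<1$. This direction is essentially immediate once the identity $U^kv=\lambda^kv$ is invoked.

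For the sufficiency direction, my preferred route is the Jordan canonical form. I would write $U=PJP^{-1}$, where $J$ is block diagonal with Jordan blocks $J_i=\lambda_iI+N_i$ on the diagonal and $N_i$ nilpotent of index at most the block size $m_i$. Since $\lambda_iI$ and $N_i$ commute, the binomial theorem gives
\begin{equation*}
J_i^k=\sum_{j=0}^{m_i-1}\binom{k}{j}\lambda_i^{k-j}N_i^j.
\end{equation*}
Because $|\lambda_i|\le\rho(U)<1$, each coefficient $\binom{k}{j}|\lambda_i|^{k-j}$ is a polynomial in $k$ times an exponentially decaying term, and therefore tends to $0$ as $k\to\infty$. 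Consequently $J_i^k\to 0$ for every block, hence $J^k\to 0$, and finally $U^k=PJ^kP^{-1}\to 0$.

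The main technical point I expect to confront is keeping the Jordan-block bookkeeping clean, in particular justifying the entry-wise limit through the nilpotent expansion and invoking the equivalence of all matrix norms on $\mathbb{R}^{n\times n}$ so that the conclusion $U^k\to 0$ is norm-independent. An alternative route that sidesteps the Jordan expansion is to appeal to the fact that for every $\varepsilon>0$ there exists a matrix norm $\|\cdot\|_\varepsilon$ satisfying $\|U\|_\varepsilon\le\rho(U)+\varepsilon$; choosing $\varepsilon$ small enough that $\rho(U)+\varepsilon<1$ gives $\|U^k\|_\varepsilon\le\|U\|_\varepsilon^k\to 0$. Since this auxiliary norm is itself typically constructed from the Jordan form, the underlying effort is essentially the same, so I would favor the direct Jordan-based argument for transparency.
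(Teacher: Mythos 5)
Your proof is correct; the paper does not prove this lemma at all but imports it from Saad's book, and your Jordan-form argument (eigenvector identity for necessity, binomial expansion of $J_i^k=(\lambda_iI+N_i)^k$ for sufficiency) is essentially the standard textbook proof of that cited result.
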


\begin{lem}[e.g., {\cite[Theorem~1.11]{saad2003}}]\label{lem:2.01}
For~$U\in\mathbb{R}^{n\times n}$, the series~$\sum\limits_{k=0}^\infty U^k$ converges if and only if~$\rho(U)<1$ and we have~$\sum\limits_{k=0}^\infty U^k=(I-U)^{-1}$ whenever it converges.
\end{lem}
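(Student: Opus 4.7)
The plan is to derive both implications directly from Lemma~\ref{lem:2.0}, which provides the bridge between powers of $U$ vanishing and the spectral radius condition.

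For the forward direction, I would argue by contrapositive form of the term test: if the series $\sum_{k=0}^\infty U^k$ converges, then necessarily $U^k \to 0$ as $k\to\infty$ (since the tails of a convergent series vanish, and in particular single terms must). An appeal to Lemma~\ref{lem:2.0} then immediately yields $\rho(U) < 1$.

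For the converse together with the identification of the limit, I would work with the partial sums $S_N = \sum_{k=0}^N U^k$ and exploit the telescoping identity
\[
(I - U) S_N \;=\; S_N (I - U) \;=\; I - U^{N+1},
\]
which is verified by a direct expansion. Under the hypothesis $\rho(U) < 1$, the scalar $1$ is not an eigenvalue of $U$, so $I - U$ is invertible. Multiplying by $(I-U)^{-1}$ gives $S_N = (I-U)^{-1}(I - U^{N+1})$, and letting $N\to\infty$ while applying Lemma~\ref{lem:2.0} to conclude $U^{N+1}\to 0$ yields $S_N \to (I-U)^{-1}$, as required.

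There is no serious obstacle here: the result is essentially a corollary of Lemma~\ref{lem:2.0}, and the only point deserving explicit mention is the invertibility of $I - U$, which is an immediate consequence of the spectral radius bound. The proof is therefore short and almost entirely algebraic once the telescoping identity is written down.
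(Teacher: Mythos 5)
Your argument is correct and is essentially the standard textbook proof (the paper itself states this lemma as a known result cited from Saad and gives no proof of its own): the term test combined with Lemma~\ref{lem:2.0} handles necessity, and the telescoping identity $(I-U)S_N = I - U^{N+1}$ together with the invertibility of $I-U$ (since $\rho(U)<1$ excludes $1$ as an eigenvalue) handles sufficiency and identifies the limit. No gaps.
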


\begin{lem}[{\cite[Theorem~1.33]{saad2003}}]\label{lem:2.2}
Let $A\in\mathbb{R}^{n\times n}$ be an $M$-matrix and $B\in\mathbb{R}^{n\times n}$ be a $Z$-matrix. If $A\leq B$, then $B$ is an $M$-matrix.
\end{lem}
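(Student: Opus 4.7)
The plan is to reduce the statement to the classical fact that the spectral radius is monotone on the cone of entrywise nonnegative matrices, combined with the standard ``shifted nonnegative matrix'' characterization of $M$-matrices. Recall that a $Z$-matrix $C$ with positive diagonal is an $M$-matrix if and only if it admits a representation $C = sI - P$ with $P\geq 0$ and $\rho(P) < s$. I intend to use this representation as the main bridge between the hypothesis on $A$ and the desired conclusion on $B$.

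First I would observe that, since $A \leq B$ entrywise and $A$ is an $M$-matrix (so in particular the diagonal entries $a_{ii}$ are positive), the diagonal entries $b_{ii} \geq a_{ii} > 0$ are also positive; combined with the assumption that $B$ is a $Z$-matrix, this makes it meaningful to compare $A$ and $B$ via a common shift. I would then pick a scalar $s > 0$ large enough that both $P := sI - A$ and $Q := sI - B$ are entrywise nonnegative. The inequality $A \leq B$ is equivalent to $sI - B \leq sI - A$, i.e.\ $0 \leq Q \leq P$.

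The key step is then Perron--Frobenius monotonicity: for nonnegative matrices $0 \leq Q \leq P$, one has $\rho(Q) \leq \rho(P)$. Since $A = sI - P$ is an $M$-matrix, the characterization above gives $\rho(P) < s$, whence $\rho(Q) \leq \rho(P) < s$. Therefore $B = sI - Q$ is a $Z$-matrix of the form ``shift minus nonnegative matrix with subdominant spectral radius,'' which is exactly the $M$-matrix condition; in particular $B^{-1} = \sum_{k=0}^{\infty} s^{-(k+1)} Q^k \geq 0$ (using \Cref{lem:2.01} applied to $s^{-1}Q$), so $B$ is an $M$-matrix.

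The only step requiring care is the Perron--Frobenius monotonicity inequality $\rho(Q)\leq\rho(P)$ for $0\leq Q\leq P$; it is classical but not among the lemmas collected in this section. I would therefore cite it directly from a standard reference on nonnegative matrices rather than prove it inline, since everything else in the argument is just bookkeeping with the shifted representation.
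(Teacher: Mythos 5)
Your argument is correct: writing $A = sI - P$ and $B = sI - Q$ with $0 \le Q \le P$ and invoking Perron--Frobenius monotonicity to get $\rho(Q) \le \rho(P) < s$, then recovering $B^{-1} \ge 0$ from the nonnegative Neumann series, is the standard textbook proof of this comparison result. The paper itself offers no proof---the lemma is quoted directly from Saad (Theorem~1.33)---so there is nothing to compare against, and your route is exactly the one such references use.
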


\begin{lem}[{\cite[Lemma 3.4]{frma1989}}]\label{lem:2.3}
If $A\in\mathbb{R}^{n\times n}$ is an $H$-matrix, then $|A^{-1}|\leq \langle A\rangle ^{-1}$.
\end{lem}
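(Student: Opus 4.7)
The plan is to expose $A^{-1}$ and $\langle A\rangle^{-1}$ as Neumann series built from the same ingredients, and then compare them termwise. First I would decompose $A=D-E$, where $D$ is the diagonal part of $A$ and $E$ has zero diagonal with $E_{ij}=-a_{ij}$ for $i\neq j$. Because $D$ is diagonal, $|D^{-1}|=|D|^{-1}$ and $|D^{-1}E|=|D|^{-1}|E|$ hold as genuine equalities (not merely inequalities), and by construction $\langle A\rangle=|D|-|E|$.

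Since $A$ is an $H$-matrix, $\langle A\rangle=|D|-|E|$ is an $M$-matrix; in particular $|D|$ is nonsingular (so $D^{-1}$ makes sense) and $\langle A\rangle=|D|(I-|D|^{-1}|E|)$ with $|D|^{-1}|E|\ge 0$. Applying \Cref{lem:rho} to the $M$-splitting $\langle A\rangle=|D|-|E|$ gives $\rho(|D|^{-1}|E|)<1$, and since $|D^{-1}E|=|D|^{-1}|E|$, the usual spectral radius bound $\rho(D^{-1}E)\le\rho(|D^{-1}E|)$ yields $\rho(D^{-1}E)<1$ as well.

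With both spectral radii below $1$, \Cref{lem:2.01} converts each inverse into an absolutely convergent Neumann series:
\begin{equation*}
A^{-1}=(I-D^{-1}E)^{-1}D^{-1}=\sum_{k=0}^{\infty}(D^{-1}E)^{k}D^{-1},
\end{equation*}
\begin{equation*}
\langle A\rangle^{-1}=(I-|D|^{-1}|E|)^{-1}|D|^{-1}=\sum_{k=0}^{\infty}(|D|^{-1}|E|)^{k}|D|^{-1}.
\end{equation*}
Now I would take absolute values termwise, using the submultiplicativity of $|\cdot|$ and the identities $|D^{-1}|=|D|^{-1}$, $|D^{-1}E|=|D|^{-1}|E|$ noted above, to obtain $|(D^{-1}E)^{k}D^{-1}|\le(|D|^{-1}|E|)^{k}|D|^{-1}$ for every $k$. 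Summing over $k\ge 0$ (both series being nonnegative and convergent) delivers $|A^{-1}|\le\langle A\rangle^{-1}$.

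The only delicate point is the step $\rho(D^{-1}E)<1$, which is what makes the Neumann expansion of $A^{-1}$ legitimate; everything else is essentially bookkeeping on nonnegative matrices. The proof leans crucially on $D$ being diagonal, because that is what upgrades the generic inequality $|D^{-1}E|\le|D|^{-1}|E|$ to the equality needed for a clean termwise comparison.
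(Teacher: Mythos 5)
Your proof is correct. Note that the paper does not prove this lemma at all: it is quoted directly from Frommer and Mayer \cite{frma1989}, so there is no in-paper argument to compare against. What you have written is the classical Jacobi-splitting proof of this result: $A=D-E$, $\langle A\rangle=|D|-|E|$, the $M$-splitting gives $\rho(|D|^{-1}|E|)<1$ via \Cref{lem:rho}, and the two Neumann series are compared termwise using the exact identities $|D^{-1}|=|D|^{-1}$ and $|D^{-1}E|=|D|^{-1}|E|$ available because $D$ is diagonal. All the supporting facts check out, including the implicit one that a nonsingular $M$-matrix has positive diagonal, so $D$ is indeed invertible. The only ingredient not among the paper's stated lemmas is the bound $\rho(D^{-1}E)\le\rho(|D^{-1}E|)$; this is standard (Perron--Frobenius/Wielandt), but if you want to avoid citing it you can skip it entirely: since $\sum_k(|D|^{-1}|E|)^k$ converges, the entrywise domination $|(D^{-1}E)^k|\le(|D|^{-1}|E|)^k$ already makes $\sum_k(D^{-1}E)^kD^{-1}$ absolutely convergent, and its sum $S$ satisfies $(D-E)S=I$, which simultaneously proves $A$ is invertible and identifies $A^{-1}$ with the series. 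Either way the termwise comparison then delivers $|A^{-1}|\le\langle A\rangle^{-1}$ as you state.
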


\begin{lem}[{\cite[Theorem~1.29]{saad2003}}]\label{lem:2.4}
If $A\in \mathbb{R}^{n\times n}$ is a nonnegative matrix. Then $I-A$ is nonsingular with $(I-A)^{-1} \geq 0$ if and only if $\rho(A)<1$.
\end{lem}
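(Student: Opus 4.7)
The plan is to prove the two implications separately, invoking the Neumann series from \cref{lem:2.01} for the easy direction and the Perron--Frobenius theorem on nonnegative matrices for the harder one.

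For the sufficiency, assume $\rho(A)<1$. Then \cref{lem:2.01} immediately gives convergence of $\sum_{k=0}^{\infty} A^k$ with limit $(I-A)^{-1}$; in particular $I-A$ is nonsingular. Since $A\geq 0$, every partial sum $\sum_{k=0}^{N} A^k$ is a sum of nonnegative matrices, hence nonnegative, and this property is inherited by the limit. So $(I-A)^{-1}\geq 0$, and this half of the proof is essentially a direct invocation of \cref{lem:2.01}.

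For the necessity, assume $I-A$ is nonsingular with $(I-A)^{-1}\geq 0$, and suppose for contradiction that $\rho(A)\geq 1$. By the Perron--Frobenius theorem applied to the nonnegative matrix $A$, the value $\rho(A)$ is itself an eigenvalue and admits a nonnegative eigenvector $v\geq 0$ with $v\neq 0$, so that $Av=\rho(A)v$ and hence $(I-A)v=(1-\rho(A))v$. If $\rho(A)=1$ the right-hand side is zero, contradicting nonsingularity of $I-A$. If $\rho(A)>1$, then rearranging yields $v=(1-\rho(A))(I-A)^{-1}v$; the vector $(I-A)^{-1}v$ is nonnegative by assumption on $(I-A)^{-1}$, but the scalar $1-\rho(A)$ is strictly negative, which forces $v\leq 0$, and combined with $v\geq 0$ this gives $v=0$, the desired contradiction. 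Therefore $\rho(A)<1$.

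The principal obstacle is the necessity direction, since it depends on the existence of a nonnegative Perron eigenvector attached to $\rho(A)$; if one wants a fully self-contained argument one would have to quote or prove the relevant part of the Perron--Frobenius theorem separately. Once that spectral fact is in hand, the rest is a routine sign analysis, and the sufficiency direction is a one-line consequence of the Neumann-series lemma already established in \cref{lem:2.01}.
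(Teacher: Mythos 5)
Your proof is correct. The paper does not prove this lemma at all---it is quoted directly from Saad \cite{saad2003} as Theorem~1.29---and your argument (Neumann series via \cref{lem:2.01} for sufficiency, and the existence of a nonnegative Perron eigenvector for $\rho(A)$ to rule out $\rho(A)\geq 1$ for necessity) is the standard proof found in that reference, so nothing further is needed.
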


\begin{lem}[{\cite[Lemma 5]{edhs2017}}]\label{lem:ggs4ave}
Let $a\in \mathbb{R}$ and $a>1$. Then equation $ax - |x| -c = 0$ has only one solution in $\mathbb{R}$ for any $c\in \mathbb{R}$. If $c\ge 0$, then the solution is given by $x = \frac{c}{a-1}$. Otherwise, $x = \frac{c}{a+1}$ is the solution.
\end{lem}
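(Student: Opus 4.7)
The plan is to split into cases on the sign of $x$ and use the piecewise linear structure of the map $f(x) = ax - |x|$.

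First, I would rewrite the equation as $f(x) = c$ where $f(x) := ax - |x|$. On the half-line $x \ge 0$, we have $f(x) = (a-1)x$; on the half-line $x < 0$, we have $f(x) = (a+1)x$. Since $a > 1$ implies both $a-1 > 0$ and $a+1 > 0$, both branches are strictly increasing linear functions, and they agree at $x = 0$ (both give $0$). Hence $f$ is continuous and strictly increasing on all of $\mathbb{R}$, so it is injective; combined with $f(x) \to \pm\infty$ as $x \to \pm\infty$, this gives existence and uniqueness of a solution for every $c \in \mathbb{R}$.

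Next I would identify the explicit solution by matching the sign of $c$ with the correct branch. If $c \ge 0$, solving on the nonnegative branch gives $x = \frac{c}{a-1}$, and this candidate is indeed $\ge 0$, so it lies in the intended region; by uniqueness it is the solution. If $c < 0$, solving on the negative branch gives $x = \frac{c}{a+1}$, which is $< 0$ as required, and again uniqueness finishes the case. (The boundary case $c = 0$ is consistent since both formulas return $0$.)

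I do not anticipate any serious obstacle: the only subtlety is to observe that on each branch one must check the candidate solution actually lies in the half-line associated with that branch, but this is immediate from the signs of $a-1$ and $a+1$. The monotonicity argument for uniqueness avoids any need for a more delicate fixed-point or contraction analysis.
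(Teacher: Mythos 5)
Your proof is correct. Note, however, that the paper does not actually prove this lemma: it is quoted verbatim from \cite[Lemma~5]{edhs2017}, and the closest the paper comes to an argument is in its generalization (Lemma~\ref{lem:3.1}, the scalar equation $ax-b\vert x\vert=c$ with $a>\vert b\vert$), where unique solvability is obtained by invoking Rohn's unique-solvability theorem \cite[Theorem~2]{rohn2009} and the explicit formulas are then merely ``checked.'' Your route is genuinely different and more elementary: you observe that $f(x)=ax-\vert x\vert$ is piecewise linear with both slopes $a-1$ and $a+1$ positive, hence continuous, strictly increasing, and surjective onto $\mathbb{R}$, which gives existence and uniqueness without any external machinery; the sign check that each candidate lies in its own half-line then pins down the formulas. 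What your approach buys is self-containedness and transparency, and it transfers verbatim to the generalized Lemma~\ref{lem:3.1}, since $a>\vert b\vert$ makes both slopes $a-b$ and $a+b$ positive in exactly the same way; what the paper's citation-based approach buys is brevity and a direct link to the general matrix-level solvability theory it relies on elsewhere. The one point worth stating explicitly in your write-up is the cross-branch monotonicity (if $x_1<0\le x_2$ then $f(x_1)<0\le f(x_2)$), which is what makes the two increasing branches glue into a globally strictly increasing function, but this is immediate.
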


\section{The GGS method for GAVEs}\label{sec:ggs}
In this section, we will present a GGS iterative method to solve GAVEs~\eqref{eq:gave}. To this end, we first recall the GGS iterative method for solving AVEs~\eqref{eq:ave} \cite{edhs2017}.

Let $$A=D_A-L_A-U_A,$$
where $D_A,-L_A$ and $-U_A$ are the diagonal, the strictly lower triangular and the strictly upper triangular parts of $A$, respectively. Then AVEs~\eqref{eq:ave} can be rewritten as the fixed-point equation
\begin{equation*}
(D_A-L_A)x-|x|=U_A x+b,
\end{equation*}
from which we can define the GGS iteration for solving AVEs~\eqref{eq:ave} as
\begin{equation}\label{eq:ggs4ave}
(D_A-L_A)x^{(k+1)}-|x^{(k+1)}|=U_Ax^{(k)}+b,\quad k=0,1,2,\ldots
\end{equation}
with $x^{(0)}$ being a given initial guess. In each iteration of the GGS method for solving AVEs~\eqref{eq:ave} (recall \eqref{eq:ggs4ave}), a lower triangular absolute value system
\begin{equation}\label{eq:tave}
(D_A-L_A)x-|x|=c
\end{equation}
should be solved with $c$ being a given vector. Based on Lemma~\ref{lem:ggs4ave}, under the condition that the diagonal entries of $A$ exceed unit, the solution of the system \eqref{eq:tave} can be uniquely determined and then the GGS iteration \eqref{eq:ggs4ave} is well-defined.

In the following, we will extend the GGS iteration \eqref{eq:ggs4ave} to solve GAVEs~\eqref{eq:gave}. Since $B$ may be singular, we cannot firstly transform GAVEs~\eqref{eq:gave} into AVEs~\eqref{eq:ave} and then use \eqref{eq:ggs4ave}. Instead, we also split $B$ as
$$B = D_B - L_B - U_B,$$
where $D_B,-L_B$ and $-U_B$ are the diagonal, the strictly lower triangular and the strictly upper triangular parts of $B$, respectively. Then GAVEs~\eqref{eq:gave} can be written as the following fixed-point equation
\begin{equation*}
(D_A-L_A)x-(D_B-L_B)|x|=U_A x-U_B|x|+b,
\end{equation*}
from which we can define the following GGS iteration for solving GAVEs~\eqref{eq:gave}:
\begin{equation}\label{eq:ggs4gave}
(D_A-L_A)x^{(k+1)}-(D_B-L_B)|x^{(k+1)}|=U_A x^{(k)}-U_B|x^{(k)}|+b,\quad k=0,1,2,\ldots,
\end{equation}
where $x^{(0)}$ is the initial guess. In each iteration of the GGS method for solving GAVEs~\eqref{eq:ave} (see \eqref{eq:ggs4gave}), a lower triangular generalized absolute value system
\begin{equation}\label{eq:tgave}
(D_A-L_A)x-(D_B-L_B)|x|=c
\end{equation}
should be solved with $c$ being a given vector. The following lemma gives conditions under which each equation of the system \eqref{eq:tgave} can be uniquely solvable for any $c\in \mathbb{R}^n$ and then the GGS iteration \eqref{eq:ggs4gave} is well-defined.

\begin{lem} \label{lem:3.1}
Let $a,b\in\mathbb{R}$ and $a>|b|$. Then the equation $ax-b|x|=c$ has a unique solution in $\mathbb{R}$ for any $c\in \mathbb{R}$. Specifically, if $c\geq 0$, the solution is $x=\frac{c}{a-b}$. Otherwise, the solution is $x=\frac{c}{a+b}$.
\end{lem}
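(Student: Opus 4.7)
The plan is to prove existence by direct verification of the two candidate formulas, and uniqueness by a short contraction-type argument exploiting the hypothesis $a > |b|$.

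First I would observe that $a > |b| \geq 0$ forces $a > 0$ and, more importantly, both $a-b > 0$ and $a+b > 0$, so the two candidate expressions $\frac{c}{a-b}$ and $\frac{c}{a+b}$ are well defined. For existence, I would split on the sign of $c$. If $c \geq 0$, set $x = \frac{c}{a-b}$; since $a-b > 0$, we get $x \geq 0$, hence $|x| = x$, and a direct substitution gives $ax - b|x| = (a-b)x = c$. If $c < 0$, set $x = \frac{c}{a+b}$; since $a+b > 0$, we get $x < 0$, hence $|x| = -x$, and substitution gives $ax - b|x| = (a+b)x = c$. In both subcases the candidate formula produces an actual solution, which also establishes the explicit forms claimed in the statement.

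For uniqueness, suppose $x_1$ and $x_2$ both satisfy $ax - b|x| = c$. Subtracting the two equations yields $a(x_1 - x_2) = b(|x_1| - |x_2|)$, and taking absolute values gives
\begin{equation*}
a\,|x_1 - x_2| \;=\; |b|\cdot\bigl||x_1| - |x_2|\bigr| \;\leq\; |b|\cdot|x_1 - x_2|,
\end{equation*}
where the inequality is the one-dimensional version of \Cref{lem:2.1}. Since $a > |b|$, this forces $|x_1 - x_2| = 0$, so $x_1 = x_2$.

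No step here looks like a real obstacle; the mild point to be careful about is handling the sign of $c$ consistently so that the two candidate formulas really yield $|x| = x$ or $|x| = -x$ as required. The case $c = 0$ is harmless, since both formulas then agree at $x = 0$. Note also that this lemma strictly generalizes \Cref{lem:ggs4ave} (take $b = 1$, $a > 1$), so the proof specializes correctly.
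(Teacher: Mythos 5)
Your proof is correct. The existence half coincides with what the paper does: it also splits on the sign of $c$ and verifies the two formulas directly (the paper merely says ``it can be checked''). Where you genuinely diverge is in the uniqueness half. The paper does not argue uniqueness at all; it outsources it to \cite[Theorem~2]{rohn2009}, which gives unique solvability of $Ax-B|x|=c$ under a singular-value condition that in the scalar case reduces exactly to $a>|b|$. You instead give a two-line self-contained argument: subtracting the equations for two solutions gives $a(x_1-x_2)=b(|x_1|-|x_2|)$, and the reverse triangle inequality (the $n=1$ case of Lemma~\ref{lem:2.1}) yields $(a-|b|)\,|x_1-x_2|\le 0$, forcing $x_1=x_2$. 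Your route buys a proof that is entirely elementary and keeps the lemma independent of the external reference; the paper's route is shorter on the page and emphasizes that the scalar lemma is a special case of the general unique-solvability theory for GAVEs. Both are sound; your version is arguably preferable for a reader who wants the lemma to stand on its own, and it specializes correctly to Lemma~\ref{lem:ggs4ave} when $b=1$, as you note.
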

\begin{proof}
The unique solvability follows from $a>|b|$ and \cite[Theorem 2]{rohn2009}.  If $c\geq 0$, it can be checked that the unique solution is $x=\frac{c}{a-b}$. Otherwise, the unique solution is $x=\frac{c}{a+b}$.
\end{proof}

\begin{rem}
Obviously, if $b=1$, then Lemma~\ref{lem:3.1} reduces to Lemma~\ref{lem:ggs4ave}.
\end{rem}

Based on Lemma~\ref{lem:3.1}, the GGS iteration method for solving GAVEs~\eqref{eq:gave} with $ D_A>|D_B|$ can be summarized as in \Cref{alg:1}.

\begin{algorithm}[h]
\caption{The GGS method for GAVEs}\label{alg:1}
\begin{algorithmic}[1]
    \REQUIRE matrices $A,B\in\mathbb{R}^{n\times n}$ with $ D_A>|D_B|$  and $b\in\mathbb{R}^n$. Take the initial vector $x^{(0)}\in\mathbb{R}^n$. Set $k:=0$.

    \ENSURE an approximate solution of GAVEs \eqref{eq:gave}.

    \REPEAT

    \STATE For $i=1$

    \STATE $s=b_1-\sum\limits_{j=2}^{n} a_{ij}x_j^{(k)}
 +\sum\limits_{j=2}^{n} b_{ij}|x_j^{(k)}|$

 \STATE If $s\geq 0$, then $x_1^{(k+1)}=s/(a_{11}-b_{11})$

  \STATE Else $x_1^{(k+1)}=s/(a_{11}+b_{11})$

    \STATE For $i=2,\ldots,n-1$, Do

    \STATE $s=b_{i}-\sum\limits_{j=1}^{i-1} a_{ij}x_j^{(k+1)}
    +\sum\limits_{j=1}^{i-1} b_{ij}|x_j^{(k+1)}|
 -\sum\limits_{j=i+1}^{n} a_{ij}x_j^{(k)}
 +\sum\limits_{j=i+1}^{n} b_{ij}|x_j^{(k)}|$

 \STATE If $s\geq 0$, then $x_i^{(k+1)}=s/(a_{ii}-b_{ii})$

  \STATE Else $x_i^{(k+1)}=s/(a_{ii}+b_{ii})$

 \STATE EndDo

 \STATE For $i=n$

 \STATE $s=b_n-\sum\limits_{j=1}^{n-1} a_{ij}x_j^{(k+1)}
    +\sum\limits_{j=1}^{n-1} b_{ij}|x_j^{(k+1)}|$

 \STATE If $s\geq 0$, then $x_n^{(k+1)}=s/(a_{nn}-b_{nn})$

  \STATE Else $x_n^{(k+1)}=s/(a_{nn}+b_{nn})$

    \UNTIL{convergence};
    \RETURN the last $x^{(k+1)}$ as the approximation to the solution of GAVEs~\eqref{eq:gave}.
\end{algorithmic}
\end{algorithm}

%
%
%
%
%
%
%
%

In the following, we will discuss the convergence properties of the GGS
method.

\begin{thm}\label{thm:3.1}
Suppose that $D_A>|D_B|$ and the matrix $D_A-|L_A|-D_B-|L_B|$ is strictly row diagonally dominant. If
\begin{equation}\label{eq:con}
\|(D_A-L_A)^{-1}U_A\|_\infty+\|(D_A-L_A)^{-1}U_B\|_\infty < 1-\|(D_A-L_A)^{-1}(D_B+|L_B|)\|_\infty,
\end{equation}
then the iteration sequence $\{x^{(k)}\}^\infty_{k=0}$ generated by \eqref{eq:ggs4gave} converges to the unique solution $x_*$ of the GAVEs \eqref{eq:gave} for any initial vector $x^{(0)}\in\mathbb{R}^n$.
\end{thm}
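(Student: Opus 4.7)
My approach is a direct contraction argument in the infinity norm. I would start by fixing a putative solution $x_*$ of \eqref{eq:gave} and denoting $e^{(k)} := x^{(k)} - x_*$. Subtracting the fixed-point identity
$(D_A - L_A)x_* - (D_B - L_B)|x_*| = U_A x_* - U_B|x_*| + b$
from \eqref{eq:ggs4gave} produces the error recursion
$$(D_A - L_A)e^{(k+1)} = (D_B - L_B)\bigl(|x^{(k+1)}| - |x_*|\bigr) + U_A e^{(k)} - U_B\bigl(|x^{(k)}| - |x_*|\bigr).$$
Because $D_A > |D_B|$ forces $a_{ii} > 0$, the matrix $D_A - L_A$ is lower triangular with positive diagonal, hence invertible; I multiply through by $(D_A - L_A)^{-1}$ to solve for $e^{(k+1)}$ explicitly.

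Next I take the infinity norm, use \Cref{lem:2.1} to bound each $\||x^{(\bullet)}| - |x_*|\|_\infty$ by $\|e^{(\bullet)}\|_\infty$, and exploit the elementary inequality $\|MN\|_\infty \le \bigl\||M|\,|N|\bigr\|_\infty$ together with the identity $|D_B - L_B| = |D_B| + |L_B|$. Writing $\alpha,\beta,\gamma$ for the three norms that appear in \eqref{eq:con}, this yields
$$\|e^{(k+1)}\|_\infty \le \alpha\,\|e^{(k+1)}\|_\infty + (\beta+\gamma)\,\|e^{(k)}\|_\infty.$$
Hypothesis \eqref{eq:con} rewrites as $\alpha+\beta+\gamma<1$, so in particular $\alpha<1$, and moving $\alpha\|e^{(k+1)}\|_\infty$ to the left gives
$$\|e^{(k+1)}\|_\infty \le \frac{\beta+\gamma}{1-\alpha}\,\|e^{(k)}\|_\infty,$$
a strict contraction with rate $q := (\beta+\gamma)/(1-\alpha)<1$. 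Iteration drives $\|e^{(k)}\|_\infty \to 0$ geometrically.

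For uniqueness of $x_*$, if $y_1, y_2$ are two solutions the identical derivation (with $y_1,y_2$ replacing $x^{(k+1)},x_*$) yields $\|y_1 - y_2\|_\infty \le (\alpha+\beta+\gamma)\|y_1 - y_2\|_\infty$, forcing $y_1 = y_2$. If existence of $x_*$ is not taken for granted, I would rerun the same estimate on consecutive iterates $d^{(k)} := x^{(k+1)}-x^{(k)}$ to obtain $\|d^{(k)}\|_\infty \le q\,\|d^{(k-1)}\|_\infty$, so that $\{x^{(k)}\}$ is Cauchy in $(\mathbb{R}^n,\|\cdot\|_\infty)$; continuity of $|\cdot|$ then lets me pass to the limit in \eqref{eq:ggs4gave} and certify the limit as a solution.

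The principal obstacle is the implicit nature of \eqref{eq:ggs4gave}: since $|x^{(k+1)}|$ occurs on the left, any naive norm bound places $\|e^{(k+1)}\|_\infty$ on both sides, and the contraction only emerges after a rearrangement that itself demands $\alpha < 1$. The role of the separate strict-row-diagonal-dominance assumption on $D_A - |L_A| - D_B - |L_B|$ is precisely to control $\alpha$ (via the $M$-matrix machinery of \Cref{lem:2.2,lem:2.3}, which bounds $|(D_A-L_A)^{-1}|$ by $(D_A - |L_A|)^{-1}$) and to make this hypothesis compatible with \eqref{eq:con}; reconciling the two conditions through the identification $|D_B - L_B| = |D_B|+|L_B|$ is the only delicate bookkeeping step.
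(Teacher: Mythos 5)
Your proposal follows essentially the same route as the paper's proof: difference two instances of \eqref{eq:ggs4gave}, invert the lower-triangular matrix $D_A-L_A$, control $\||x|-|y|\|_\infty$ by $\|x-y\|_\infty$ via \Cref{lem:2.1}, move the implicit $\|e^{(k+1)}\|_\infty$ term to the left to extract the contraction factor $(\beta+\gamma)/(1-\alpha)<1$, and conclude by a telescoping/Cauchy argument for existence and by the same estimate applied to two solutions for uniqueness. Two points of genuine divergence are worth recording. First, you obtain $\alpha<1$ directly from \eqref{eq:con} (since $\beta+\gamma\ge 0$), which makes the entire first half of the paper's proof --- the Neumann-series argument $|(I-F)^{-1}|\le (I-|F|)^{-1}$ with $F=D_A^{-1}L_A$, deducing $\alpha<1$ from the strict row diagonal dominance of $D_A-|L_A|-D_B-|L_B|$ --- logically redundant; your shortcut is valid and in fact shows that the dominance hypothesis plays no role in the convergence argument (your suggestion to reach the same bound through \Cref{lem:2.2,lem:2.3} would also work, since $D_A-L_A$ is an $H$-matrix here, but it is simply unnecessary). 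Second, a bookkeeping caveat: the coefficient your estimate actually produces in front of $\|e^{(k+1)}\|_\infty$ is $\bigl\||(D_A-L_A)^{-1}|\,(|D_B|+|L_B|)\bigr\|_\infty$, which upper-bounds but does not in general equal the quantity $\|(D_A-L_A)^{-1}(D_B+|L_B|)\|_\infty$ appearing in \eqref{eq:con} (note $D_B$ versus $|D_B|$, and the absolute value sitting outside rather than inside the product), so \eqref{eq:con} as literally stated does not immediately yield your $q<1$. You flag this reconciliation as the delicate step, and rightly so; be aware that the paper's own proof makes the identical silent identification (it even writes the difference recursion with $D_B+|L_B|$ where $D_B-L_B$ belongs), so your argument is no weaker than the source, but an airtight version would either restate \eqref{eq:con} with $\bigl\||(D_A-L_A)^{-1}|(|D_B|+|L_B|)\bigr\|_\infty$ or supply the missing comparison of the two norms.
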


\begin{proof}

Firstly, we prove that $\|(D_A-L_A)^{-1}(D_B+|L_B|)\|_\infty<1$. Obviously, if $L_A=L_B=0$, then $\|(D_A-L_A)^{-1}(D_B+|L_B|)\|_\infty=\|D_A^{-1}D_B\|_\infty<1$ since $D_A>|D_B|$. We assume that $L_A$ and $L_B$ are not simultaneously zero in the following.

Since $D_A>|D_B|$ and $D_A-|L_A|-D_B-|L_B|$ is strictly row diagonally dominant,
we obtain
\begin{equation*}
|(D_A-|L_A|-D_B-|L_B|)_{ii}|>\sum\limits_{j\neq i}|(D_A-|L_A|-D_B-|L_B|)_{ij}|,~\forall i.
\end{equation*}
Then $0\leq |L_A|e<(D_A-D_B-|L_B|)e$, or equivalently,
\begin{equation}\label{eq:1}
D_A^{-1} (D_B+|L_B|)e<(I-|F|)e,
\end{equation}
where $F=D_A^{-1}L_A$ and $e=(1,1,\ldots,1)^\top$. In addition, since $F$ is a strictly lower triangle matrix, we have
\begin{align}\label{eq:2}
0 &\leq |(I-F)^{-1}|=|I+F+F^2+\ldots+F^{n-1}|\nonumber\\
&\leq I+|F|+|F|^2+\ldots+|F|^{n-1}=(I-|F|)^{-1}.
\end{align}
Hence, from \eqref{eq:1} and \eqref{eq:2}, we obtain
\begin{align*}
 |(D_A-L_A)^{-1}(D_B+|L_B|)|e &= |(D_A-D_A F)^{-1}(D_B+|L_B|)|e\\ &=|(I-F)^{-1}D_A^{-1}(D_B+|L_B|)|e\\
&\leq |(I-F)^{-1}||D_A^{-1}(D_B+|L_B|)|e<(I-|F|)^{-1}(I-|F|)e\\
&=e.
\end{align*}
This implies
\begin{equation*}
\|(D_A-L_A)^{-1}(D_B+|L_B|)\|_\infty<1.
\end{equation*}

From \eqref{eq:ggs4gave}, we have
\begin{align*}
(D_A-L_A)(x^{(k+1)}-x^{(k)})&=(D_B+|L_B|) (|x^{(k+1)}|-|x^{(k)}|)\\
&+U_A (x^{(k)}-x^{(k-1)})-U_B (|x^{(k)}|-|x^{(k-1)}|),
\end{align*}
from which and Lemma~\ref{lem:2.1}, we get
\begin{align*}
\|x^{(k+1)}-x^{(k)}\|_\infty &\leq \|(D_A-L_A)^{-1}(D_B+|L_B|)\|_\infty \|x^{(k+1)}-x^{(k)}\|_\infty\\
&\qquad +\|(D_A-L_A)^{-1}U_A\|_\infty \|x^{(k)}-x^{(k-1)}\|_\infty\\
&\qquad +\|(D_A-L_A)^{-1}U_B\|_\infty \|x^{(k)}-x^{(k-1)}\|_\infty,
\end{align*}
which  is equivalent to
\begin{align*}
&(1-\|(D_A-L_A)^{-1}(D_B+|L_B|)\|_\infty)\|x^{(k+1)}-x^{(k)}\|_\infty \\
&\leq (\|(D_A-L_A)^{-1}U_A\|_\infty+\|(D_A-L_A)^{-1}U_B\|_\infty)\|x^{(k)}-x^{(k-1)}\|_\infty.
\end{align*}

Since $\|(D_A-L_A)^{-1}(D_B+|L_B|)\|_\infty<1$, then
\begin{equation*}
\|x^{(k+1)}-x^{(k)}\|_\infty \leq \frac{\|(D_A-L_A)^{-1}U_A\|_\infty+\|(D_A-L_A)^{-1}U_B\|_\infty}{1-\|(D_A-L_A)^{-1}(D_B+|L_B|)\|_\infty}
\|x^{(k)}-x^{(k-1)}\|_\infty.
\end{equation*}

We denote $t=\frac{\|(D_A-L_A)^{-1}U_A\|_\infty+\|(D_A-L_A)^{-1}U_B\|_\infty}{1-\|(D_A-L_A)^{-1}(D_B+|L_B|)\|_\infty}$, then $t<1$ if \eqref{eq:con} holds. For each $m\geq 1$, with $t<1$, it follows from Lemma \ref{lem:2.0} and Lemma \ref{lem:2.01} that
\begin{align*}
 \|x^{(k+m)}-x^{(k)}\|_\infty&=\left\|\sum_{j=0}^{m-1}(x^{(k+j+1)}-x^{(k+j)})\right\|_\infty
\leq\sum_{j=0}^{m-1} \|x^{(k+j+1)}-x^{(k+j)}\|_\infty\\
&\leq \sum_{j=0}^{\infty}t^{j+1}\|x^{(k)}-x^{(k-1)}\|_\infty
=\frac{t}{1-t}\|x^{(k)}-x^{(k-1)}\|_\infty\\
&\leq \frac{t^k}{1-t} \|x^{(1)}-x^{(0)}\|_\infty\rightarrow 0
~~(\text{as}\quad k\rightarrow \infty).
\end{align*}
Therefore, $\{x^{(k)}\}_{k=0}^{\infty}$ is a Cauchy sequence and convergent in $\mathbb{R}^n$. Let $\lim\limits_{k\rightarrow\infty} x^{(k)} =x_{*}$, it follows from \eqref{eq:ggs4gave} that
\begin{equation*}
(D_A-L_A)x_*-(D_B-L_B)|x_*|=U_A x_*-U_B|x_*|+b,
\end{equation*}
that is, $Ax_*-B|x_*|=b$. Hence,  $x_*$ is a solution to GAVEs \eqref{eq:gave}.

To prove uniqueness of the solution, let  $y_*$ be another solution of GAVEs \eqref{eq:gave}. Then we have
\begin{align*}
\|x_*-y_*\|_\infty &\leq \|(D_A-L_A)^{-1}(D_B+|L_B|)\|_\infty \|x_*-y_*\|_\infty\\
&\qquad +\|(D_A-L_A)^{-1}U_A\|_\infty \|x_*-y_*\|_\infty
+\|(D_A-L_A)^{-1}U_B\|_\infty \|x_*-y_*\|_\infty\\
&<\|(D_A-L_A)^{-1}(D_B+|L_B|)\|_\infty \|x_*-y_*\|_\infty\\
&\qquad +(1-\|(D_A-L_A)^{-1}(D_B+|L_B|)\|_\infty)\|x_*-y_*\|_\infty\\
&=\|x_*-y_*\|_\infty,
\end{align*}
which is a contradiction whenever $x_*\neq y_*$.
\end{proof}

\begin{rem}
When $B=I$, Theorem \ref{thm:3.1} can recover the result of \cite[Theorem 3]{edhs2017} without using the assumption of the solvability of AVEs~\eqref{eq:ave}.
\end{rem}

\begin{thm}\label{thm:3.2}
If $D_A>|D_B|$ and the matrix $\langle A \rangle-|B|$ is an $M$-matrix, then the iteration sequence $\{x^{(k)}\}^\infty_{k=0}$ generated by \eqref{eq:ggs4gave} converges to the unique solution $x_*$ of the GAVEs~\eqref{eq:gave} for any initial vector $x^{(0)}\in\mathbb{R}^n$.
\end{thm}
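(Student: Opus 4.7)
The plan is to mirror the structure of \Cref{thm:3.1}, but to replace the infinity-norm contraction estimate by a componentwise (order-theoretic) one driven by a nonnegative iteration matrix $H$ with $\rho(H)<1$. Well-definedness of \eqref{eq:ggs4gave} is immediate: since $D_A>|D_B|$ yields $a_{ii}>|b_{ii}|$ for every $i$, \Cref{lem:3.1} makes each forward-substitution step uniquely solvable. The target is a recursion $v^{(k)}\le H v^{(k-1)}$, where $v^{(k)}:=|x^{(k+1)}-x^{(k)}|$ taken componentwise.

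To produce that recursion, I will subtract the $i$-th row of \eqref{eq:ggs4gave} written at iterations $k$ and $k-1$, introduce $\Delta^{(k)}:=x^{(k+1)}-x^{(k)}$ and $\delta^{(k)}:=|x^{(k+1)}|-|x^{(k)}|$, and use $|\delta_j^{(k)}|\le|\Delta_j^{(k)}|$ (\Cref{lem:2.1} componentwise) together with the reverse triangle inequality $|a_{ii}\Delta_i^{(k)}-b_{ii}\delta_i^{(k)}|\ge (a_{ii}-|b_{ii}|)|\Delta_i^{(k)}|$ (valid because $a_{ii}>|b_{ii}|$). This gives
\[
(D_A-|D_B|)\,v^{(k)} \;\le\; (|L_A|+|L_B|)\,v^{(k)} + (|U_A|+|U_B|)\,v^{(k-1)},
\]
i.e.\ $Mv^{(k)}\le Uv^{(k-1)}$ with $M:=D_A-|D_B|-|L_A|-|L_B|$ and $U:=|U_A|+|U_B|$. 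The crucial structural step is to show that $M$ is itself an $M$-matrix: it is a $Z$-matrix (positive diagonal $D_A-|D_B|$, nonpositive off-diagonals), and by hypothesis $\langle A\rangle-|B|=M-U\le M$ is an $M$-matrix, so \Cref{lem:2.2} upgrades $M$ to an $M$-matrix. Consequently $M^{-1}\ge 0$, giving $v^{(k)}\le H v^{(k-1)}$ with $H:=M^{-1}U\ge 0$; moreover $\langle A\rangle-|B|=M-U$ is an $M$-splitting, so \Cref{lem:rho} produces $\rho(H)<1$.

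Convergence then follows by the Cauchy-sequence argument of \Cref{thm:3.1}: iterating yields $v^{(k)}\le H^k v^{(0)}$, and with $\rho(H)<1$ the series $\sum_k H^k$ converges (\Cref{lem:2.01}), so $\{x^{(k)}\}$ is Cauchy in $(\mathbb{R}^n,\|\cdot\|_\infty)$ and the limit $x_*$ satisfies \eqref{eq:gave} by passing to the limit in \eqref{eq:ggs4gave}. For uniqueness, the same row-by-row estimate applied to any two solutions $x_*, y_*$ gives $|x_*-y_*|\le H|x_*-y_*|$; since $H\ge 0$ and $\rho(H)<1$, \Cref{lem:2.4} yields $(I-H)^{-1}\ge 0$, and $(I-H)|x_*-y_*|\le 0$ forces $x_*=y_*$. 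The main obstacle I anticipate is the componentwise algebra leading to $Mv^{(k)}\le Uv^{(k-1)}$—the sign-sensitive lower bound on $a_{ii}\Delta_i^{(k)}-b_{ii}\delta_i^{(k)}$ in particular—together with the promotion of $M$ to an $M$-matrix; after those, \Cref{lem:rho} and the Cauchy argument close the proof essentially for free.
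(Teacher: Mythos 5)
Your proposal is correct, and it lands on exactly the same contraction matrix as the paper --- your $H=M^{-1}U$ with $M=D_A-|D_B|-|L_A|-|L_B|=\langle D_A-L_A\rangle-|D_B-L_B|$ and $U=|U_A|+|U_B|$ is precisely the paper's $\hat L=\hat M^{-1}\hat N$ --- but you reach it by a different mechanism. The paper inverts $D_A-L_A$ first and controls the inverse via the $H$-matrix bound $|(D_A-L_A)^{-1}|\le\langle D_A-L_A\rangle^{-1}$ (\Cref{lem:2.3}), which forces it to apply \Cref{lem:2.2}, \Cref{lem:rho} and \Cref{lem:2.4} an extra time just to show that $I-\langle D_A-L_A\rangle^{-1}|D_B-L_B|$ is invertible with nonnegative inverse before it can isolate $|x^{(k+1)}-x^{(k)}|$. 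You instead work row by row, using $\bigl|a_{ii}\Delta_i-b_{ii}\delta_i\bigr|\ge(a_{ii}-|b_{ii}|)|\Delta_i|$ (valid since $a_{ii}>|b_{ii}|$ and $|\delta_i|\le|\Delta_i|$), which yields $Mv^{(k)}\le Uv^{(k-1)}$ directly; a single application of \Cref{lem:2.2} promotes $M$ to an $M$-matrix and \Cref{lem:rho} gives $\rho(H)<1$. This is more elementary (no \Cref{lem:2.3}, no intermediate splitting) and makes the role of the hypothesis $D_A>|D_B|$ more transparent. Your uniqueness argument also differs: you derive $(I-H)|x_*-y_*|\le 0$ for any two solutions and invoke $(I-H)^{-1}\ge 0$, whereas the paper starts the iteration at an arbitrary solution $\hat x$ and uses the uniqueness in \Cref{lem:3.1} to conclude the iterates are stationary, hence $\hat x=x_*$. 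Both are sound; yours is self-contained in the contraction framework, the paper's exploits the well-definedness of the forward substitution.
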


\begin{proof}
Since $\langle A\rangle-|B|=\langle D_A-L_A\rangle-|U_A|-|B|$ is an $M$-matrix, then $\langle D_A-L_A\rangle$ is an $M$-matrix by Lemma~\ref{lem:2.2}.
It follows from Lemma \ref{lem:2.3} that
\begin{equation*}
|(D_A-L_A)^{-1}| \leq \langle D_A-L_A\rangle^{-1}.
\end{equation*}

From \eqref{eq:ggs4gave}, we have
\begin{align*}
 &|x^{(k+1)}-x^{(k)}|\\\leq &|(D_A-L_A)^{-1}|(|D_B-L_B||x^{(k+1)}-x^{(k)}|
 +|U_A||x^{(k)}-x^{(k-1)}|+|U_B||x^{(k)}-x^{(k-1)}|)\\
 \leq&\langle D_A-L_A\rangle^{-1}|D_B-L_B||x^{(k+1)}-x^{(k)}|
+\langle D_A-L_A\rangle^{-1}(|U_A|+|U_B|)|x^{(k)}-x^{(k-1)}|,
\end{align*}
or equivalently,
\begin{equation*}
(I-\langle D_A-L_A\rangle^{-1}|D_B-L_B|)|x^{(k+1)}-x^{(k)}|
\leq \langle D_A-L_A\rangle^{-1}(|U_A|+|U_B|)|x^{(k)}-x^{(k-1)}|.
\end{equation*}

Since $\langle A\rangle-|B|=\langle D_A-L_A\rangle-|D_B-L_B|-|U_A|-|U_B|$ is an $M$-matrix, $\langle D_A-L_A\rangle-|D_B-L_B|$ is also an $M$-matrix by Lemma \ref{lem:2.2}. Thus $\langle D_A-L_A\rangle-|D_B-L_B|$ is an $M$-splitting and it follows from Lemma \ref{lem:rho} that  $\rho(\langle D_A-L_A\rangle^{-1}|D_B-L_B|)<1$.

From Lemma \Cref{lem:2.4}, we have $(I-\langle D_A-L_A\rangle^{-1}|D_B-L_B|)^{-1}\geq 0$. Then
\begin{align*}
|x^{(k+1)}-x^{(k)}|&\leq (I-\langle D_A-L_A\rangle^{-1}|D_B-L_B|)^{-1}\langle D_A-L_A\rangle^{-1}(|U_A|+|U_B|)|x^{(k)}-x^{(k-1)}|\\
&=(\langle D_A-L_A\rangle-|D_B-L_B|)^{-1}(|U_A|+|U_B|)|x^{(k)}-x^{(k-1)}|.
\end{align*}
Next we verify that $\rho(\hat{L})<1$, where $\hat{L}=\hat{M}^{-1}\hat{N}$ with
\begin{equation*}
\hat{M}=\langle D_A-L_A\rangle-|D_B-L_B|,~\hat{N}=|U_A|+|U_B|,
\end{equation*}
and
$$
\hat{M}-\hat{N}=\langle D_A-L_A\rangle-|D_B-L_B|-|U_A|-|U_B|=\langle A\rangle-|B|.
$$
From the assumption that $\langle A\rangle-|B|$ is an $M$-matrix and $\hat{M}-\hat{N}$ is an $M$-splitting, it follows from Lemma~\ref{lem:rho} that $\rho(\hat{L})<1$.

For each $m\geq1$, with $\rho(\hat{L})<1$, it follows Lemma \ref{lem:2.0} and Lemma \ref{lem:2.01} that
\begin{align*}
|x^{(k+m)}-x^{(k)}|&=\left|\sum_{j=0}^{m-1}(x^{(k+j+1)}-x^{(k+j)})\right|
\leq\sum_{j=0}^{m-1} |x^{(k+j+1)}-x^{(k+j)}|\\
&\leq \sum_{j=0}^{\infty}\hat{L}^{j+1}|x^{(k)}-x^{(k-1)}|
=(I-\hat{L})^{-1}\hat{L}|x^{(k)}-x^{(k-1)}|\\
&\leq (I-\hat{L})^{-1}\hat{L}^k |x^{(1)}-x^{(0)}|\rightarrow 0
~~(\text{as}\quad k\rightarrow \infty).
\end{align*}
Therefore, $\{x^{(k)}\}_{k=0}^{\infty}$ is a Cauchy sequence and convergent in $\mathbb{R}^n$. Let $\lim\limits_{k\rightarrow\infty} x^{(k)} =x_{*}$, it follows from \eqref{eq:ggs4gave} that
\begin{equation*}
(D_A-L_A)x_*-(D_B-L_B)|x_*|=U_A x_*-U_B|x_*|+b,
\end{equation*}
which implies that $x_*$ is a solution to GAVEs \eqref{eq:gave}.

Next we prove the uniqueness of the solution. Let $\hat{x}$ be any solution of
GAVEs~\eqref{eq:gave} and take $x^{(0)}=\hat{x}$, then we have
\begin{equation}\label{eq:x0}
(D_A-L_A)x^{(0)}-(D_B-L_B)|x^{(0)}|=U_A x^{(0)}-U_B|x^{(0)}|+b.
\end{equation}
From \eqref{eq:ggs4gave}, we have
\begin{equation*}
(D_A-L_A)x^{(1)}-(D_B-L_B)|x^{(1)}|=U_A x^{(0)} -U_B|x^{(0)}|+b,
\end{equation*}
which has the same right hand side as that of \eqref{eq:x0}. According to Lemma~\ref{lem:3.1}, we have $x^{(1)} = x^{(0)}=\hat{x}$. Proceeding in the same way, we obtain $x^{(k)}=\hat{x}$ for all $k\geq 0$. The convergence of $x^{(k)}$ to $x_*$ implies that $\hat{x}=x_*$. Thus $x_*$ is the unique solution.
\end{proof}

\begin{rem}
When $B=I$, Theorem \ref{thm:3.2} can recover the result of \cite[Theorem 4]{edhs2017} without using the assumption of the solvability of AVEs~\eqref{eq:ave}.
\end{rem}

\section{Numerical results}\label{sec:Num}

In this section, we present numerical experiments to evaluate the performance of the proposed GGS method for solving GAVEs \eqref{eq:gave} and compare it with some existing methods. All experiments are conducted by using MATLAB (version 9.10 (R2021a)) on a personal computer with IntelCore (TM) i7 CPU 2.60 GHz, 16.0 GB memory.


\subsection{GAVEs arising from LCP}\label{sec:lcp}
In this subsection, the GGS method is used to solve GAVEs~\eqref{eq:gave} arising from LCP. LCP is to find a vector $z\in \mathbb{R}^n$ such that
\begin{equation}\label{eq:lcp}
z\geq 0,\quad w:=Mz+q\geq 0,\quad z^\top w=0,
\end{equation}
where $M\in\mathbb{R}^{n\times n}$ and $q\in\mathbb{R}^n$ are known. One notable method for solving LCP \eqref{eq:lcp} is the accelerated modulus-based matrix splitting iteration method \cite{zhyi2013}. This method reformulates LCP \eqref{eq:lcp} as an implicit fixed-point equation, which can be equivalently expressed as GAVEs:
\begin{equation}\label{eq:gave1}
(M+\Omega)x-(\Omega-M)|x|=-\gamma q,
\end{equation}
where $\Omega$ is an $n\times n$ positive diagonal matrix and $\gamma>0$ is a given parameter. By splitting $M=M_1-N_1=M_2-N_2$, the accelerated modulus-based matrix splitting iteration
\begin{equation*}
(M_1+\Omega)x^{(k+1)}=N_1 x^{(k)}+(\Omega-M_2)|x^{(k)}|+N_2|x^{(k+1)}|-\gamma q
\end{equation*}
is developed \cite{zhyi2013}. Specially, if $M_1=D_M-L_M,N_1=U_M,M_2=D_M-U_M,N_2=L_M$, it gives the accelerated modulus-based Gauss-Seidel (AMGS) iteration method, see Algorithm~\ref{alg:amgs}. Here, $D_M,-L_M$ and $-U_M$ are the diagonal, the strictly lower triangular and the strictly upper triangular parts of $M$, respectively.




\begin{algorithm}
\caption{The AMGS iteration method \cite{zhyi2013}}\label{alg:amgs} Given an initial vector $x^{(0)}\in\mathbb{R}^{n}$, for $k=0,1,2,\ldots$ until the iteration sequence $\{z^{(k)}\}^{+\infty}_{k=0}$ is convergent, compute $x^{(k+1)}\in\mathbb{R}^n$ by solving the linear system
\begin{equation}\label{eq:amgs}
(D_M-L_M+\Omega)x^{(k+1)}=U_Mx^{(k)}+(\Omega-D_M+U_M)|x^{(k)}|+L_M|x^{(k+1)}|-\gamma q,
\end{equation}
and set
$$
z^{(k+1)}=\frac{1}{\gamma}(|x^{(k+1)}|+x^{(k+1)}).
$$
\end{algorithm}



Now we use our GGS method to solve GAVEs \eqref{eq:gave1}, whose iteration is
\begin{equation}\label{eq:ggs1}
(D_M-L_M+\Omega)x^{(k+1)}-(\Omega-D_M+L_M)|x^{(k+1)}|=U_M x^{(k)}+U_M|x^{(k)}|-\gamma q.
\end{equation}
Unlike \eqref{eq:amgs}, \eqref{eq:ggs1} inherits the structure of GAVEs.

In the following, we will compare the performances of GGS and AMGS when they are used to solve LCP. In the numerical results, we report the number of iteration steps (denoted by `IT'), elapsed CPU time in seconds (denoted by `CPU'), and
$$
\text{RES}(z^{(k)}):=\|\min (Mz^{(k)}+q,~z^{(k)})\|_2,
$$
where $z^{(k)}$ is the $k$th approximate solution to the LCP and the minimum is taken componentwise. The following tables record the average results of IT, CPU, and RES after ten experiments. The same goes to Example~\ref{exam:5.2} and Example~\ref{exam:5.3} (except RES).

\begin{exam}[\cite{zhyi2013}]\label{exam:5.1}
Let $m$ be a positive integer and $n=m^2$. Consider LCP \eqref{eq:lcp}, in which $M\in\mathbb{R}^{n\times n}$ is given by $M=\hat{M}+\mu I$ and $q\in\mathbb{R}^n$ is given by $q=-Mz^*$, where
\begin{equation*}
\hat{M}=tridiag(-I,S,-I)=
\begin{pmatrix}
  S & -I & 0 & \cdots & 0 & 0 \\
  -I & S & -I & \cdots & 0 & 0 \\
  0 & -I & S & \cdots & 0 & 0 \\
  \vdots & \vdots & \vdots & \ddots & \vdots & \vdots \\
  0 & 0 & 0 & \cdots & S & -I \\
  0 & 0 & 0 & \cdots & -I & S
\end{pmatrix}\in\mathbb{R}^{n\times n}
\end{equation*}
is a block-tridiagonal matrix,
\begin{equation*}
S=tridiag(-1,4,-1)=
\begin{pmatrix}
 4 & -1 & 0 & \cdots & 0 & 0 \\
  -1 & 4 & -1 & \cdots & 0 & 0 \\
  0 & -1 & 4 & \cdots & 0 & 0 \\
  \vdots & \vdots & \vdots & \ddots & \vdots & \vdots \\
  0 & 0 & 0 & \cdots & 4 & -1 \\
  0 & 0 & 0 & \cdots & -1 & 4
\end{pmatrix}\in\mathbb{R}^{m\times m}
\end{equation*}
is a tridiagonal matrix, and
$z^*=(1,2,1,2,\ldots)^\top\in\mathbb{R}^n$ is
the unique solution of  LCP~\eqref{eq:lcp}.

For this example, we set $x^{(0)}=(1,0,1,0,\ldots)^\top\in\mathbb{R}^n$, $\gamma=1$ and $\Omega=\theta D_M$. Both methods are stopped if $\text{RES}(z^{(k)})\leq 10^{-5}$ or $\text{IT}\geq 100$. For the AMGS method, as noted in \cite{zhyi2013}, the iteration parameter $\theta$ used in the actual computations is obtained experimentally by minimizing the corresponding iteration steps.
We tested values within the range $[0:0.01:2]$ and selected the value that resulted in the fewest iterations. It is important to note that the total computational cost of AMGS consists of two components: the time required to find the optimal parameter (CPU$_{opt}$) and the time for the iteration with the optimal parameter (CPU). Numerical results are shown in Table~\ref{table1}, from which we can see that AMGS with the optimal parameter $\theta_{opt}$ needs only $2$-$3$ fewer iterations than that of GGS. However, as the problem dimension $m$ increases, the computational time required to determine the optimal parameter of AMGS also grows significantly. In addition, Figure~\ref{fig1} shows that the performance of GGS is independent of $\theta$. Indeed, for this example, we observe that $s\geq0$ throughout the iteration process, and the update formula becomes
\begin{equation*}
x_i^{(k+1)}=\frac{s}{a_{ii}-b_{ii}}=\frac{s}{(M + \Omega)_{ii}-(\Omega-M)_{ii}}=\frac{s}{2M_{ii}},
\end{equation*}
which is independent of the parameter $\theta$.


\begin{table}[htbp]
\centering
\caption{Numerical comparison of AMGS and GGS with $\mu=4$ in Example~\ref{exam:5.1}.}\label{table1}
\begin{tabular}{ccccccccc}\hline
&Method     &$m$                     \\\cline{3-7}
&{}            &60         &70        &80         &90      &100     \\\hline
&AMGS \\
&$\theta_{opt}$  &0.80   &0.80    &0.80     &0.81     &0.81  \\
&IT                     &13        &13      &13    &13     &13    \\
&CPU$_{opt}$   &1.9862    &3.2808  &4.6706  &6.4324  &9.6480  \\
&CPU                 &0.6006   &0.8467  &1.1913  &1.6087  &2.1141  \\
&RES  &8.0448e-06  &8.7655e-06  &9.4324e-06  &7.8475e-06  &8.3151e-06 \\
&{}\\
&GGS \\
&IT       &15         &15          &16       &16    &16   \\
&CPU    &0.6761   &0.9637  &1.3992  &2.0001  &2.5369  \\
&RES  &7.4541e-06  &8.8482e-06   &3.3955e-06   &3.8602e-06  &4.3248e-06      \\\hline
\end{tabular}
\end{table}

\begin{figure}[htbp]
\centering
\begin{minipage}{0.48\linewidth}
\centering
\includegraphics[scale=0.55]{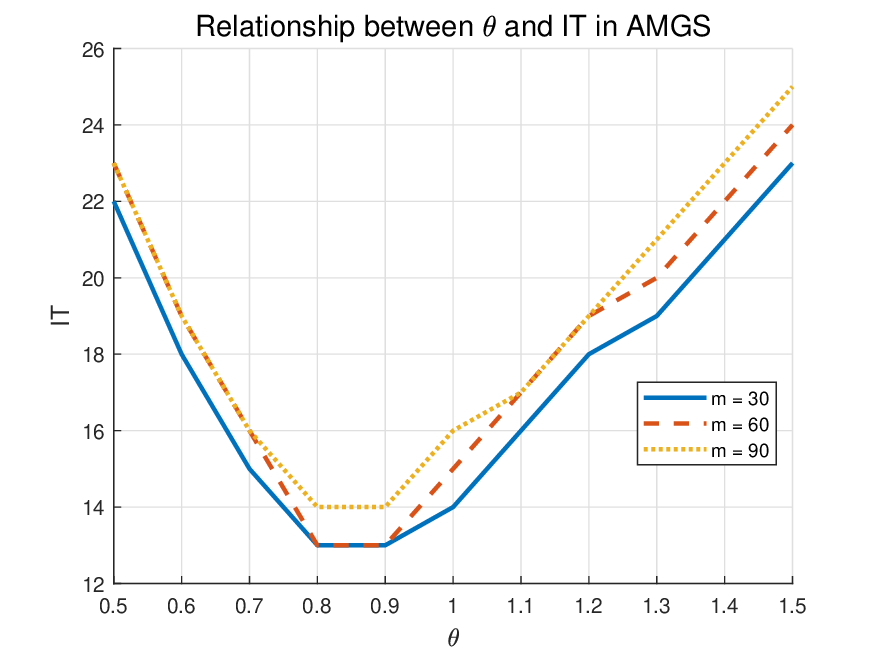}
\end{minipage}
\begin{minipage}{0.48\linewidth}
\centering
\includegraphics[scale=0.55]{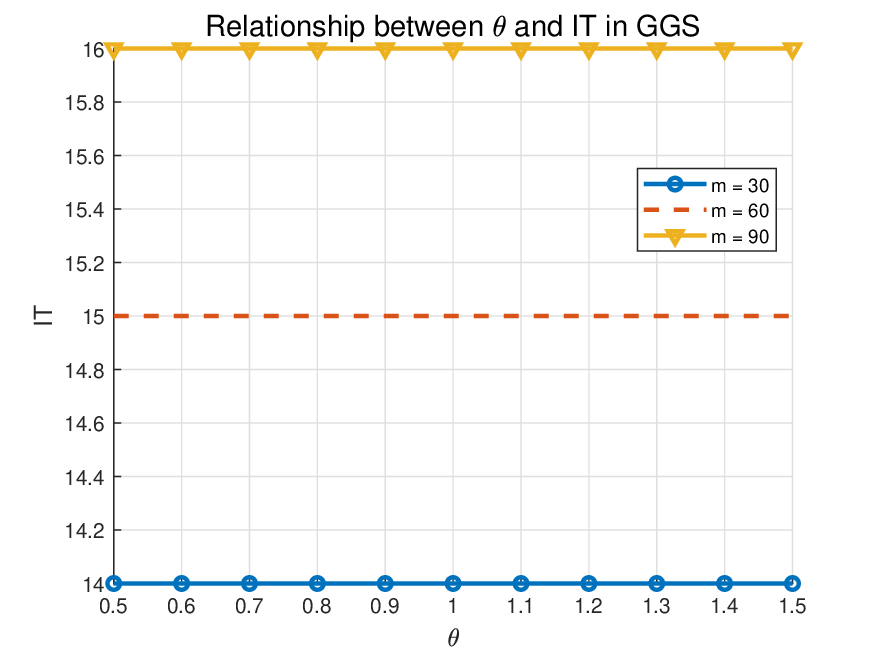}
\end{minipage}
\caption{Parametric influence on GGS and AMGS in Example~\ref{exam:5.1}}\label{fig1}
\end{figure}
\end{exam}

To further verify the parameter insensitivity of the GGS method, we consider another example in which the value of $s$ frequently becomes negative during the iteration process.

\begin{exam}\label{exam:5.2}
Let $m$ be a positive integer and $n=m^2$. Consider LCP \eqref{eq:lcp}, in which $M\in\mathbb{R}^{n\times n}$ is given by $M=\hat{M}+\mu I$ and $q\in\mathbb{R}^n$ is given by $q=-Mz^*$, where
$
\hat{M}=tridiag(-I,S,-I)
$
is a block-tridiagonal matrix, $S=tridiag(-1,4,-1)$ is a tridiagonal matrix, and
$z^*=(1,10,1,10,\ldots)^\top\in\mathbb{R}^n$ is
the unique solution of LCP \eqref{eq:lcp}.

Similarly, as shown in Table~\ref{table2}, the AMGS method with the optimal parameter $\theta_{opt}$ requires approximately $3$-$4$ fewer iterations than the GGS method. However, the computational overhead associated with determining the optimal parameter increases significantly as the problem dimension $m$ becomes larger. In this example, although many cases with $s < 0$ are observed, Figure \ref{fig2} demonstrates that the performance of the GGS method remains unaffected by the parameter $\theta$. However, the theory guarantee for this phenomenon needs further study.

\begin{table}[htbp]
\centering
\caption{Numerical comparison of AMGS and GGS with $\mu=4$ in Example~\ref{exam:5.2}.}\label{table2}
\begin{tabular}{ccccccccc}\hline
&Method     &$m$                     \\\cline{3-7}
&{}                     &60         &70        &80         &90      &100     \\\hline
&AMGS \\
&$\theta_{opt}$  &0.79   &0.80    &0.80     &0.80     &0.80  \\
&IT                     &14        &14      &14    &14     &14    \\
&CPU$_{opt}$   &1.6501    &2.4690  &3.3423  &4.5097  &6.3419  \\
&CPU                 &0.6357   &0.9342  &1.2707  &1.7339  &2.4778 \\
&RES  &9.4830e-06  &7.9177e-06  &8.5292e-06  &9.1031e-06  &9.6462e-06 \\
&{}\\
&GGS \\
&IT       &17         &17          &17       &17         &18   \\
&CPU    &0.7925   &1.1012  &1.5353  &2.0940  &3.2469  \\
&RES  &5.9028e-06  &7.0313e-06   &8.1598e-06   &9.2882e-06  &3.4576e-06      \\\hline
\end{tabular}
\end{table}

\begin{figure}[htbp]
\centering
\begin{minipage}{0.48\linewidth}
\centering
\includegraphics[scale=0.55]{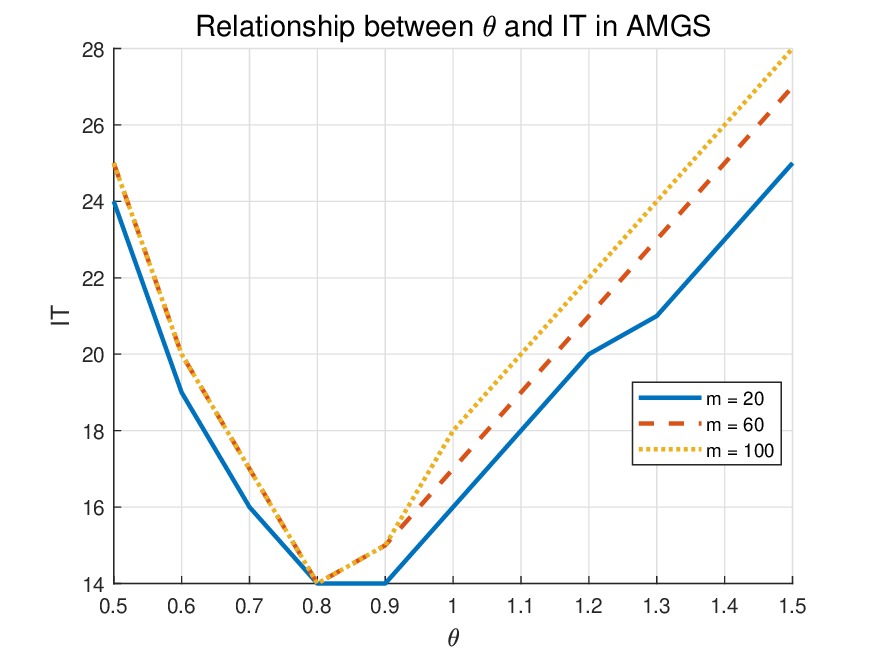}
\end{minipage}
\begin{minipage}{0.48\linewidth}
\centering
\includegraphics[scale=0.55]{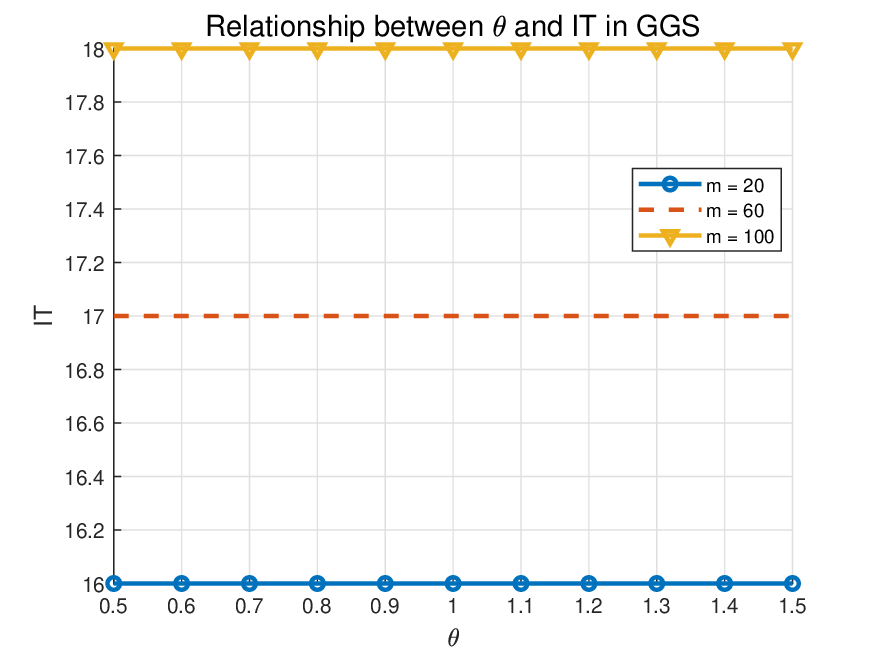}
\end{minipage}
\caption{Parametric influence on GGS and AMGS in Example~\ref{exam:5.2}}\label{fig2}
\end{figure}
\end{exam}

\subsection{GAVEs with $B$ being singular}
In this subsection, we evaluate the performance of our proposed GGS method for solving GAVEs~\eqref{eq:gave} with $B$ being singular. We will compare our GGS method with the following nine methods:

\noindent\textbf{GNMS}: the generalized Newton-based matrix splitting method \cite{lich2025}
\begin{equation*}
\begin{cases}
y^{(k+1)}=(1-\tau)y^{(k)}+\tau Q_1^{-1}(Q_2y^{(k)}+|x^{(k)}|),\\
x^{(k+1)}=M^{-1}(Nx^{(k)}+BQ_1y^{(k+1)}-BQ_2y^{(k)}+b),
\end{cases}
\end{equation*}
where $Q_1=10I,Q_2=0.5I,M=D_A-\frac{3}{4}L_A$ and $N=\frac{1}{4}L_A+U_A$. Here and in the sequel, $D_A,-L_A$ and $-U_A$ are the diagonal part, the strictly lower-triangular and the strictly upper-triangular parts of $A$, respectively.

\noindent\textbf{RMS}: the relaxed-based matrix splitting iteration method \cite{soso2023}
\begin{equation*}
\begin{cases}
x^{(k+1)}=S^{-1}(Tx^{(k)}+By^{(k)}+b),\\
y^{(k+1)}=(1-\tau)y^{(k)}+\tau |x^{(k+1)}|,
\end{cases}
\end{equation*}
where $S=M$ and $T=N$.

\noindent\textbf{FPI}: the fixed point iteration method \cite{lild2023}
\begin{equation*}
\begin{cases}
x^{(k+1)}=A^{-1}(By^{(k)}+b),\\
y^{(k+1)}=(1-\tau)y^{(k)}+\tau |x^{(k+1)}|.
\end{cases}
\end{equation*}

\noindent\textbf{GN}: the generalized Newton method \cite{mang2009}
\begin{equation*}
x^{(k+1)}=(A-B\mathcal{D}(x^{(k)}))^{-1}b.
\end{equation*}

\noindent\textbf{Picard}: the Picard iteration method \cite{rohf2014}
\begin{equation*}
x^{(k+1)}=A^{-1}(B|x^{(k)}|+b).
\end{equation*}

\noindent\textbf{NSNA}: the non-monotone smoothing Newton algorithm\cite{cyhm2025} with the same parameters used in \cite{cyhm2025}.\\

\noindent\textbf{MN}: the \Cref{alg:mn} \cite{wacc2019}.\\

\noindent\textbf{SSMN}: the shift splitting MN iteration method \cite{liyi2021}
\begin{equation*}
x^{(k+1)}=(A+\Omega)^{-1}((\Omega-A) x^{(k)}+2B|x^{(k)}|+2b).
\end{equation*}

\noindent\textbf{MNMS}: the modified Newton-based matrix splitting iteration method \cite{zcss2024}
\begin{equation*}
x^{(k+1)}=[\Omega+M_1-M_2\mathcal{D}(x^{(k)})]^{-1}(\Omega x^{(k)}+N_1x^{(k)}-N_2|x^{(k)}|+b),
\end{equation*}
where $M_1=M,N_1=N,M_2=D_B-\frac{1}{4}L_B$ and $N_2=\frac{3}{4}L_B+U_B$ with $D_B,-L_B$ and $-U_B$ are the diagonal part, the strictly lower-triangular and the strictly upper-triangular parts of $B$, respectively.

For the methods MN, SSMN and MNMS, $\Omega=0.5D_A$. For the GNMS, RMS and FPI methods, $\tau_{opt}$ is the experimental optimal parameter within the range $[0:0.01:2]$, and CPU$_{opt}$ refers to the time taken to find the optimal parameter.

\begin{exam}\label{exam:5.3}
Let $m$ be a positive integer and $n=m^2$. Consider the GAVEs \eqref{eq:gave}, in which $A, B \in\mathbb{R}^{n\times n}$ are given by
\begin{equation*}
A=\begin{cases}
      20 + 10 * rand(n), & \mbox{if } i = j \\
      -0.001 * rand(n), & \mbox{otherwise}
    \end{cases}~~
B=\begin{cases}
      4 * rand(n), & \mbox{if } i = j \\
      -0.001 * rand(n), & \mbox{otherwise}.
    \end{cases}
\end{equation*}
To ensure reproducibility, we fixed the random seed to $42$ before generating the random matrices, using rng($42$) in MATLAB. In addition, to investigate the behavior of the GGS with singular matrix $B$, we set B(end,:)=B(end-1, :). All iterative methods start with the initial point $x^{(0)}=(0,0,0,\ldots)^\top\in\mathbb{R}^n$ and $y^{(0)}=b$ (if necessary). The iteration is terminated if $\text{RES}(z^{(k)})\leq 10^{-8}$ or $\text{IT}\geq 100$. Here, `RES' is set to be
$$
\text{RES}:=\frac{\|Ax^{(k)}-B|x^{(k)}|-b\|_2}{\|b\|_2}.
$$

We examine this convergence condition \eqref{eq:con} by varying the $m$ in the range $[60:10:100]$. To this end, let
$$
{\rm inf}\_{\rm norm} = \|(D_A-L_A)^{-1}U_A\|_\infty+\|(D_A-L_A)^{-1}U_B\|_\infty + \|(D_A-L_A)^{-1}(D_B+|L_B|)\|_\infty.
$$
As shown in the Figure \ref{fig3}, the condition \eqref{eq:con} is satisfied.

\begin{figure}[htbp]
  \centering
  \includegraphics[width=0.70\linewidth]{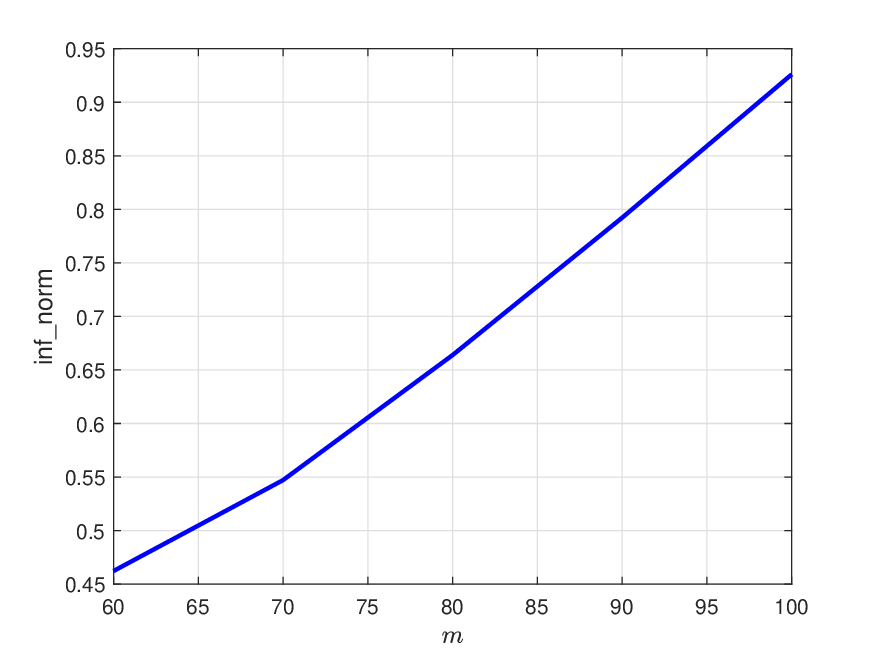}
  \caption{Relationship between $m$ and {\rm inf}\_{\rm norm} in Example \ref{exam:5.3}}\label{fig3}
\end{figure}

\begin{table}[htp]
\centering
\footnotesize
\caption{Numerical comparison of the testing methods in Example \ref{exam:5.3}.}\label{table5.3}
\begin{tabular}{cccccccccc}\hline
&Method   &  &$m$                     \\\cline{3-8}
&{}         &   &60         &70        &80         &90      &100     \\\hline
&GGS \\
& &IT      &3               &3            &3           &3           &3\\
& &CPU   &\textbf{0.0914}  &\textbf{0.1607}  &\textbf{0.2364} &\textbf{0.3255}   &\textbf{0.4959}\\
& &RES   &2.5601e-10  &4.0867e-10  &6.1429e-10  &8.8826e-10  &1.1878e-09\\
&GNMS \\
& &$\tau_{opt}$  &1.05  &1.05  &1.04  &1.05  &1.05\\
& &IT                   &9       &9      &9       &9       &9\\
& &CPU$_{opt}$  &2.5925  &4.5085   &6.7955  &10.9341  &16.3335\\
& &CPU                &0.1861  &0.3402  &0.5938  &0.9791  &1.5486\\
& &RES     &6.0314e-09  &5.2380e-09 &8.9340e-09  &6.6178e-09 &9.1788e-09\\
&RMS \\
& &$\tau_{opt}$  &1.07  &1.07  &1.02  &1.05  &1.01\\
& &IT                    &9    &9        &10       &10      &11    \\
& &CPU$_{opt}$   &2.3369 &4.0083  &6.8234  &11.0313 &16.4864\\
& &CPU                 &0.1788 &0.3192  &0.6091  &1.0120 &1.6424\\
& &RES      &8.4239e-09  &7.5739e-09  &7.4656e-09  &6.9346e-09  &9.6828e-09\\
&FPI \\
& &$\tau_{opt}$  &1.07  &1.03  &1.01  &1.04  &0.95\\
& &IT                    &9      &10      &10    &10      &12    \\
& &CPU$_{opt}$   &1.9457 &2.8848  &4.8055  &8.3914 &11.7268\\
& &CPU                 &0.1501 &0.2904  &0.4879  &0.8263 &1.4465\\
& &RES      &8.2283e-09  &7.3230e-09  &6.5618e-09  &7.8190e-09  &9.1991e-09\\
&GN \\
& &IT       &2               &2            &2           &2           &2\\
& &CPU    &0.6275       &1.1513   &2.3611  &4.2372   &8.1908\\
& &RES   &3.3166e-15 &3.7873e-15  &4.2747e-15  &4.8541e-15  &5.4069e-15\\
&Picard \\
& &IT      &10               &10                &10           &10              &11\\
& &CPU   &2.7168         &5.5041         &10.5166  &20.2776      &43.7244\\
& &RES   &2.5125e-09  &2.5369e-09  &2.2125e-09  &1.8380e-09  &2.0952e-09\\
&NSNA \\
& &IT      &3                &3                   &3                 &3                &3\\
& &CPU   &3.6507        &9.4282          &20.1135       &42.2720     &86.2828\\
& &RES   &8.1475e-14   &1.6361e-13   &3.4374e-13   &7.5085e-13 &1.5340e-12 \\
&MN \\
& &IT      &21              &21          &21         &21          &21\\
& &CPU   &0.6771       &1.2285    &2.0342  &3.3699  &4.9941\\
& &RES   &6.5730e-09 &6.3702e-098  &5.6349e-09  &4.7618e-09 &6.5094e-09\\
&SSMN \\
& &IT      &17               &18                 &19             &21            &23\\
& &CPU   &0.9526        &1.7362           &3.3269     &5.6516   &9.5320\\
& &RES   &8.3287e-09  &6.1207e-09    &6.3519e-09  &4.4148e-09  &5.4101e-09\\
&MNMS \\
& &IT      &18               &18                 &18                &18          &18\\
& &CPU   &1.9426        &3.4567           &5.9812         &9.7676    &20.2350\\
& &RES   &7.7147e-09  &7.3095e-09   &6.7733e-09   &6.1815e-09  &5.9773e-09\\
\hline
\end{tabular}
\end{table}

Numerical results are shown in Table \ref{table5.3}, from which we can see that all tested iterative methods are convergent, and the GGS method consumes the least CPU time. Although the GN iterative method requires fewer iteration steps than the GGS iterative method, it consumes more CPU time. This is mainly because the GN iterative method needs to solve different linear equation systems at each iteration step. For the parameterized methods GNMS, RMS, and FPI methods, they incur additional computational costs in the process of identifying optimal parameters. As the dimension $m$ increases, the task of identifying optimal parameters becomes more challenging.
\end{exam}

\section{Conclusion}\label{sec:conclusion}
The paper focuses on solving the generalized absolute value equations, using a Generalized Gauss-Seidel (GGS) iteration method. The GGS method is characterized by its parameter-free strategy, which eliminates the need for parameter tuning. Under some mild conditions, the convergence of the GGS method is analyzed. Numerical results demonstrate our claims.

\end{document}